\newcommand{\IB}{{\mathbb B}}
\newcommand{\IN}{{\mathbb N}}
\newcommand{\IQ}{{\mathbb Q}}
\newcommand{\IR}{{\mathbb R}}
\newcommand{\her}{\mathrm{h}}
\DeclareMathOperator{\interior}{int}
\newcommand{\ip}[1]{\mathopen{\langle}#1\mathclose{\rangle}}
\newtheorem*{thm*}{Main Theorem}
\newtheorem{thm}{Theorem}
\newtheorem{lem}[thm]{Lemma}
\newtheorem{prop}[thm]{Proposition}
\newtheorem*{cor}{Corollary}
\theoremstyle{definition}
\newtheorem{exam}[thm]{Example}
\title[Kazhdan's property (T)]{Noncommutative real algebraic geometry of Kazhdan's property (T)}
\author{Narutaka Ozawa}
\address{RIMS, Kyoto University, \mbox{606-8502} Japan}
\email{narutaka@kurims.kyoto-u.ac.jp}
\thanks{The author was partially supported by JSPS (23540233)}
\subjclass{16A27; 46L89, 22D10, 20F10}
\keywords{Kazhdan's property (T), Positiv\-stellen\-s\"atz, spectral gap}
\date{2014/01/08}
\begin{document}
\begin{abstract}
It is well-known that a finitely generated group $\Gamma$ has Kazhdan's property (T)
if and only if the Laplacian element $\Delta$ in ${\mathbb R}[\Gamma]$ has a spectral gap.
In this paper, we prove that this phenomenon is witnessed in ${\mathbb R}[\Gamma]$.
Namely, $\Gamma$ has property (T) if and only if there are a constant $\kappa>0$ and
a finite sequence $\xi_1,\ldots,\xi_n$ in ${\mathbb R}[\Gamma]$ such that
$\Delta^2-\kappa\Delta = \sum_i \xi_i^*\xi_i$. This result suggests the possibility of
finding new examples of property (T) groups by solving equations in ${\mathbb R}[\Gamma]$,
possibly with an assist of computers.
\end{abstract}
\maketitle
\section{Introduction}
Kazhdan's property (T) is one of the two most important concepts
in analytic group theory (the other being amenability).
The mere existence of an infinite group with property (T) is surprising
and has a wide range of applications. Therefore, it is not so surprising that
any proof of property (T) for any infinite group is necessarily involved.
See the monograph \cite{bhv} for the detailed treatment of property (T).
In this paper, we investigate the noncommutative real algebraic geometric
aspects (cf.\ \cite{nt,cec,schmudgen}) of property (T) and see that
property $\mathrm{(T)}$ is \emph{provable}.
Let $\Gamma$ be a finitely generated group and $\mu$ be a finitely supported
symmetric probability measure on $\Gamma$ whose support generates $\Gamma$.
Then the corresponding Laplacian element $\Delta_\mu$
is the positive element in the real group algebra $\IR[\Gamma]$, defined by
\[
\Delta_\mu=\frac{1}{2}\sum_{x\in\Gamma}\mu(x)(1-x)^*(1-x)=1-\sum_{x\in\Gamma}\mu(x)x.
\]
It is well-known (\cite{valette,bhv}) that the group $\Gamma$ has property (T)
if and only if the Laplacian element $\Delta_\mu$ has a \emph{spectral gap}, i.e.,
there is $\kappa>0$ such that for every orthogonal $\Gamma$-representation $\pi$ on
a real Hilbert space $H$ the spectrum of $\pi(\Delta_\mu)\in\IB(H)$
is contained in $\{0\}\cup[\kappa,+\infty)$.
In turn, this is equivalent to that $\Delta_\mu^2-\kappa\Delta_\mu\geq0$
in the full group $\mathrm{C}^*$-alge\-bra $\mathrm{C}^*\Gamma$.
See Chapter 5 in \cite{bhv} or Chapter 12 in \cite{bo} for the proof of this fact.
The main result of this paper is that the inequality $\Delta_\mu^2-\kappa\Delta_\mu\geq0$
is witnessed in $\IR[\Gamma]$.
\begin{thm*}
Let $\Gamma$, $\mu$, and $\Delta_\mu$ be as above.
Then, the group $\Gamma$ has Kazhdan's property $\mathrm{(T)}$ if and only if
there are a constant $\kappa>0$ and a finite sequence $\xi_1,\ldots,\xi_n$
in $\IR[\Gamma]$ such that $\Delta_\mu^2-\kappa\Delta_\mu=\sum_i\xi_i^*\xi_i$.
Moreover, if $\Gamma$ has property $\mathrm{(T)}$, $\mu$ takes rational values,
and $\kappa>0$ is a rational number such that the spectrum of $\Delta_\mu$
in $\mathrm{C}^*\Gamma$ is contained in $\{0\}\cup(\kappa,+\infty)$,
then
\[
\Delta_\mu^2-\kappa\Delta_\mu\in\{ \sum_{i=1}^n r_i\xi_i^*\xi_i :
  n\in\IN,\,r_i\in\IQ^+,\,\xi_1,\ldots,\xi_n\in\IQ[\Gamma]\}.
\]
\end{thm*}
The present work is motivated by the following two facts. 
The first is that an explicit formula for $\Delta_\mu^2-\kappa\Delta_\mu=\sum_i\xi_i^*\xi_i$
is known in some cases (\cite{bs,zuk}, see Example~\ref{exam} below).
The second is the following result of Shalom.
\begin{cor}[Shalom \cite{shalom}]
Every property $\mathrm{(T)}$ group is a quotient of a finitely presented property $\mathrm{(T)}$ group.
\end{cor}
\begin{proof}
Let $\Gamma$ be a property $\mathrm{(T)}$ group (which is necessarily finitely generated) 
and take a symmetric finite generating subset $S$. Then, by Main Theorem, for the uniform probability 
measure $\mu$ on $S$, there a constant $\kappa>0$ and a finite sequence $\xi_1,\ldots,\xi_n$
in $\IR[\Gamma]$ such that $\Delta_\mu^2-\kappa\Delta_\mu=\sum_i\xi_i^*\xi_i$.
Let $F_S$ be the free group over $S$ and $Q\colon F_S\to\Gamma$ be the corresponding quotient map. 
Let $\tilde{\mu}$ be the uniform probability measure on $S$ which viewed as a subset of $F_S$ 
and $\tilde{\xi}_i\in\IR[F_S]$ be lifts of $\xi_i$ through $Q$. Thus, one has 
$Q(\Delta_{\tilde{\mu}}^2-\kappa\Delta_{\tilde{\mu}}-\sum_i\tilde{\xi}_i^*\tilde{\xi}_i)=0$. 
Therefore for the finite subset 
\[
{\mathcal R}=\{ x^{-1}y \in F_S: x,y\in\mathrm{supp}
 (\Delta_{\tilde{\mu}}^2-\kappa\Delta_{\tilde{\mu}}-\textstyle\sum_i\tilde{\xi}_i^*\tilde{\xi}_i) 
\mbox{ such that }Q(x)=Q(y)\},
\]
the finitely presented group $\langle S\mid{\mathcal R}\rangle$ has property (T), by Main Theorem again, 
and $\Gamma$ is a quotient of $\langle S\mid{\mathcal R}\rangle$.
\end{proof}

Another consequence of Main Theorem is that property (T) is semidecidable, 
a fact which is already observed by Silberman \cite{silberman}.
At this moment, the author is unable to provide new examples of property (T) groups, but only hopes this result
be useful in proving property (T) for some new cases, possibly with an assist of computers.
\subsection*{Acknowledgment}
The author has benefited from the conversations with M. Mimura, H. Sako, Y. Suzuki,
and R. Willet, during the conference ``Metric geometry and analysis'' held in
Kyoto University in December 2013.
\section{Preliminary}
In this section, we review basic facts about convex geometry and their
application to the study of group algebras.
See Chapters II and III of \cite{barvinok} or \cite{cec} for the former, and
\cite{nt,cec,schmudgen} for the latter.
Let $V$ be an $\IR$-vector space and $C\subset V$ be a convex subset.
An element $c\in C$ is called an \emph{algebraic interior} point of $C$
if for every $v\in V$ there is $t\in(0,1]$ such that $(1-t)c+tv \in C$.
Let $\interior(C)$ denote the set of algebraic interior points of $C$.
Notice that for every $c\in\interior(C)$ and $w\in C$, one has
$t c + (1-t)w \in \interior(C)$ for every $t\in(0,1]$.
In particular, $\interior(\interior(C))=\interior(C)$ for every convex subset $C$.
We equip $V$ with a locally convex topology, called the \emph{algebraic topology}
(also known as the finest locally convex topology), by declaring that
any convex set $C$ such that $C=\interior(C)$ is open.
Then, every linear functional on $V$ is continuous and
every linear subspace of $V$ is closed.

Now we consider the case where the $\IR$-vector space $V$ is equipped
with a linear involution $*$ and the subspace $V^{\her}=\{ v\in V : v=v^*\}$
of the hermitian elements has a distinguished cone $V^+\subset V^{\her}$.
A linear functional $\phi\colon V\to\IR$ is said to be positive
if $\phi(v^*)=\phi(v)$ and $\phi(V^+)\subset\IR_{\geq0}$.
We note that every linear functional on $V^{\her}$ uniquely extends to
an hermitian linear functional on $V$.
An element $e\in V^+$ is called an \emph{order unit} if for every $v\in V^{\her}$
there is $R>0$ such that $v+Re \in V^+$. Thus order units are
nothing but algebraic interior points of $V^+$ in $V^{\her}$.
By the Hahn--Banach separation theorem
(also known as the Eidelheit--Kakutani theorem in this context),
one has for every order unit $e\in V^+$ that
\begin{align*}
\overline{V^+} &= \{ v\in V^{\her} :
  \phi(v)\geq 0\mbox{ for all positive linear functionals $\phi$ on $V$}\}\\
 &=\{ v\in V^{\her} : v+\epsilon e \in V^+\mbox{ for every }\epsilon>0\}.
\end{align*}
Moreover, one has $\interior(\overline{V^+})=\interior(V^+)$ whenever $\interior(V^+)\neq\emptyset$.

Next, we recall Positiv\-stellen\-s\"atz for group algebras.
Let $\Gamma$ be a discrete group and $\IR[\Gamma]$ be the real group algebra.
A typical element in $\IR[\Gamma]$ will be denoted by $\xi=\sum_{x\in\Gamma}\xi(x)x$,
where $\xi\colon\Gamma\to\IR$ is a finitely supported function.
The unit of $\Gamma$, as well as the unit of $\IR[\Gamma]$, is simply denoted by $1$.
The $*$-operation is given by $\xi^*(x)=\xi(x^{-1})$,
and the positive cone of $\IR[\Gamma]^{\her}$ is given by the sums of hermitian squares
\[
\Sigma^2\IR[\Gamma]=\{\sum_{i=1}^n \xi_i^*\xi_i :
 n\in\IN,\,\xi_1,\ldots,\xi_n\in\IR[\Gamma]\} \subset \IR[\Gamma]^{\her}.
\]
Then, $1$ is an order unit for $\IR[\Gamma]$ and
the algebra $\IR[\Gamma]$, together with the $*$-positive cone
$\Sigma^2\IR[\Gamma]$, is a semi-pre-$\mathrm{C}^*$-alge\-bra
in the sense of \cite{cec}, and Positiv\-stellen\-s\"atz
(see Corollary 2 in \cite{cec} or Proposition 15 in \cite{schmudgen}) states the following.
\begin{thm}\label{thm:pst}
For $\xi\in \IR[\Gamma]^{\her}$, the following are equivalent.
\begin{enumerate}[$(1)$]
\item
$\pi(\xi)\geq0$ for every orthogonal $\Gamma$-representation $\pi$.
\item
$\xi\in\overline{\Sigma^2\IR[\Gamma]}$,
i.e., for every $\epsilon>0$ one has $\xi+\epsilon1\in\Sigma^2\IR[\Gamma]$.
\end{enumerate}
\end{thm}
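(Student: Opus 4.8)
The plan is to prove the two implications separately and then extract the rational refinement from the proof of the harder direction. The implication from the algebraic identity to property (T) is the routine one: if $\Delta_\mu^2-\kappa\Delta_\mu=\sum_i\xi_i^*\xi_i$ with $\kappa>0$, then for every orthogonal representation $\pi$ one has $\pi(\Delta_\mu)^2-\kappa\pi(\Delta_\mu)=\sum_i\pi(\xi_i)^*\pi(\xi_i)\geq0$. Since $\pi(\Delta_\mu)\geq0$, the scalar inequality $t^2-\kappa t\geq0$ forces the spectrum of $\pi(\Delta_\mu)$ into $\{0\}\cup[\kappa,+\infty)$, and this bound is uniform in $\pi$ because the single constant $\kappa$ works for all $\pi$ at once. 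This uniform spectral gap is exactly the characterisation of property (T) recalled in the introduction.

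For the converse I would start from property (T), which yields a spectral gap for $\Delta_\mu$ in $\mathrm{C}^*\Gamma$, say with spectrum in $\{0\}\cup[\kappa_0,+\infty)$ for some $\kappa_0>0$. Fixing any $\kappa\in(0,\kappa_0)$ and setting $h=\Delta_\mu^2-\kappa\Delta_\mu$, the elementary inequality $t^2-\kappa t\geq0$ on $\{0\}\cup[\kappa_0,+\infty)$ gives $\pi(h)\geq0$ for every $\pi$, so Theorem~\ref{thm:pst} places $h$ in the closure $\overline{\Sigma^2\IR[\Gamma]}$. The entire difficulty, and the only place where property (T) rather than a mere gap is used, is to \emph{remove} this closure and produce an honest finite sum of hermitian squares.

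The main obstacle is that $h$ lies on the boundary of the cone, so the shortcut $\interior(\overline{V^+})=\interior(V^+)$ from the Preliminary cannot be applied to $h$ itself. Indeed the augmentation $\phi_0(\xi)=\sum_x\xi(x)$, the positive functional attached to the trivial representation, is an algebra homomorphism with $\phi_0(\Delta_\mu)=0$, so $\phi_0(h)=0$; being annihilated by a nonzero positive functional, $h$ cannot be an order unit. The structural fact I would exploit is that, because $\kappa$ is strictly below the gap $\kappa_0$, this $\phi_0$ is the \emph{only} obstruction: any positive functional $\phi$ with $\phi(h)=0$ has $\pi(h)$ annihilating its GNS vector, and since $t(t-\kappa)$ vanishes on $\{0\}\cup[\kappa_0,+\infty)$ only at $t=0$, that vector is $\Gamma$-invariant, whence $\phi$ is a nonnegative multiple of $\phi_0$. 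Thus every sum-of-squares expression for $h$ is forced to use $\xi_i$ in the augmentation ideal $\ker\phi_0$, and $h$ is strictly positive in every direction transverse to the trivial representation. Property (T) promotes this to a genuine splitting in $\mathrm{C}^*\Gamma$ via the Kazhdan projection $p$ onto the invariant part, giving $h\geq\delta(1-p)$ for some $\delta>0$ while $hp=0$; the plan is to use this strict transverse positivity to complete $h$ into an order unit of an auxiliary cone where the interior principle does apply, and then descend. I expect the hard part to be precisely this descent: the projection $p$ does not belong to $\IR[\Gamma]$, so the $\mathrm{C}^*$-level splitting must be turned into an exact identity in $\IR[\Gamma]^{\her}$, presumably by a completion-of-squares argument built on $\Delta_\mu=\frac12\sum_x\mu(x)(1-x)^*(1-x)$, whose factors $1-x$ already lie in the augmentation ideal and hence annihilate the invariant vectors.

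Finally, for the rational refinement I would revisit this construction and localise it to a fixed finite-dimensional subspace of $\IR[\Gamma]$. Once the supports of the $\xi_i$ are confined to a fixed finite subset of $\Gamma$, the elements $\sum_i r_i\xi_i^*\xi_i$ form a closed cone that is the image, under a $\IQ$-linear map, of the positive-semidefinite cone of Gram matrices, a cone defined over $\IQ$. Because $\kappa$ is taken strictly below the gap, the corresponding linear-algebraic feasibility problem for a certificate of $h$ has a solution set with nonempty interior, so it contains rational points; selecting such a point yields the required expression with $r_i\in\IQ^+$ and $\xi_i\in\IQ[\Gamma]$.
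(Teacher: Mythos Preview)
Your proposal does not address the stated Theorem~\ref{thm:pst} at all: you nowhere argue the equivalence between $\pi(\xi)\geq0$ for all orthogonal $\pi$ and $\xi\in\overline{\Sigma^2\IR[\Gamma]}$. In fact you \emph{invoke} Theorem~\ref{thm:pst} as an input (``so Theorem~\ref{thm:pst} places $h$ in the closure''). The paper does not prove Theorem~\ref{thm:pst} either; it quotes it as a known Positivstellensatz with references. What you have actually written is a sketch for the Main Theorem.

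Read as an attempt at the Main Theorem, your diagnosis of the difficulty is accurate: $h=\Delta_\mu^2-\kappa\Delta_\mu$ sits on the boundary of $\Sigma^2\IR[\Gamma]$ because the augmentation $\phi_0$ annihilates it, and the issue is to remove the closure. But your proposed resolution---use the Kazhdan projection $p$ to get $h\geq\delta(1-p)$ in $\mathrm{C}^*\Gamma$ and then ``descend'' to $\IR[\Gamma]$---is left as a hope rather than an argument; you yourself flag the descent as the hard part and offer no mechanism beyond a vague appeal to the factors $1-x$. The paper sidesteps this entirely by changing the ambient ordered vector space: instead of $\IR[\Gamma]$ with order unit $1$, it works in the augmentation ideal $I[\Gamma]$ and shows (Lemma~\ref{lem:ou}) that $\Delta_\mu$ is an order unit there. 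Then the closure condition ``$h+\epsilon\cdot(\text{order unit})\in\Sigma^2$'' literally reads $\Delta_\mu^2-(\kappa-\epsilon)\Delta_\mu\in\Sigma^2I[\Gamma]$, so removing the closure is the same as lowering $\kappa$ by $\epsilon$, which the strict gap $\kappa<\kappa_0$ permits. The substantive ingredient you are missing is Lemma~\ref{lem:cl}: one needs $h\in\overline{\Sigma^2I[\Gamma]}$ and not merely $h\in I[\Gamma]\cap\overline{\Sigma^2\IR[\Gamma]}$, and the paper proves these coincide via Schoenberg's theorem on conditionally negative type functions. Your GNS/Kazhdan-projection heuristic does not supply this step. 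Your rational refinement is in the right spirit and close to the paper's, which likewise uses that the element in question is an interior point of the cone and hence lies in the dense rational sub-cone.
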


Here, by $\pi(\xi)\geq0$ we mean that the self-adjoint operator $\pi(\xi)$
is positive semi-definite.
It follows that for $\Delta_\mu$ as in Introduction, the group
$\Gamma$ has property $\mathrm{(T)}$ if and only if there is
a constant $\kappa>0$ such that for every $\epsilon>0$
one has $\Delta_\mu^2-\kappa\Delta_\mu+\epsilon1\in\Sigma^2\IR[\Gamma]$.
However, this characterization is not very satisfactory, because for a given $\kappa>0$
one has to solve infinitely many equations to verify property (T) of $\Gamma$.
\section{Proof of Main Theorem}
Let $I[\Gamma]=\{ \xi\in\IR[\Gamma] : \sum_x\xi(x)=0\}$ be the \emph{augmented ideal}
of the group algebra $\IR[\Gamma]$. We note that $I[\Gamma]$ is
spanned by $\{ 1-x : x\in\Gamma\}$ and that two natural positive cones
of $I[\Gamma]^{\her}$ coincide:
$I[\Gamma]\cap\Sigma^2\IR[\Gamma]=\Sigma^2I[\Gamma]$ (cf.\ Lemma 4.5 in \cite{nt}).
The reason we look at $I[\Gamma]$ is that the Laplacian element $\Delta_\mu$,
defined in Introduction, belongs to $\Sigma^2 I[\Gamma]$.
In fact, it is an order unit.
\begin{lem}\label{lem:ou}
The Laplacian element $\Delta_\mu$ is an order unit for $I[\Gamma]$. In particular,
\[
\overline{\Sigma^2I[\Gamma]}
 =\{ \xi\in I[\Gamma]^{\her} : \xi+\epsilon\Delta_\mu\in \Sigma^2I[\Gamma]\mbox{ for every }\epsilon>0\}.
\]
\end{lem}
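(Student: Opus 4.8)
The plan is to check directly that $\Delta_\mu$ is an order unit for $I[\Gamma]$; granting this, the displayed equality is immediate from the general description of $\overline{V^+}$ recorded in the Preliminary, applied with $V=I[\Gamma]$, $V^{\her}=I[\Gamma]^{\her}$, $V^+=\Sigma^2I[\Gamma]$, and order unit $e=\Delta_\mu$ (which lies in $\Sigma^2I[\Gamma]$, as already noted). Throughout, for $\eta,\zeta\in I[\Gamma]^{\her}$ I write $\eta\le\zeta$ to mean $\zeta-\eta\in\Sigma^2I[\Gamma]$.

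So the task is to show that for every $\xi\in I[\Gamma]^{\her}$ there is $R>0$ with $\xi+R\Delta_\mu\in\Sigma^2I[\Gamma]$. The first step is to observe that $I[\Gamma]^{\her}$ is the real linear span of the hermitian squares $(1-x)^*(1-x)=2-x-x^{-1}$, $x\in\Gamma$: writing $\xi=-\sum_x\xi(x)(1-x)$ and grouping $x$ with $x^{-1}$ (using $\xi(x)=\xi(x^{-1})$) exhibits $\xi$ as a real combination of the elements $2-x-x^{-1}$ and, for the involutions $x=x^{-1}$, of $2(1-x)=(1-x)^*(1-x)$. Hence it suffices to produce, for each $x\in\Gamma$, a constant $R_x>0$ with $(1-x)^*(1-x)\le R_x\Delta_\mu$; indeed, for $\xi=\sum_xc_x(1-x)^*(1-x)$ one then gets $\xi+R\Delta_\mu\in\Sigma^2I[\Gamma]$ with $R=\sum_{c_x<0}|c_x|R_x$, since the terms with $c_x\ge0$ already lie in $\Sigma^2I[\Gamma]$.

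For $x=s$ in the support $S$ of $\mu$, the identity $\Delta_\mu=\frac{1}{2}\sum_x\mu(x)(1-x)^*(1-x)$ gives $\frac{2}{\mu(s)}\Delta_\mu-(1-s)^*(1-s)\in\Sigma^2I[\Gamma]$, so $R_s=2/\mu(s)$ works. For a general $x$, write $x=s_1\cdots s_k$ with $s_i\in S$ (possible as $S$ is symmetric and generates $\Gamma$) and induct on $k$, using the identity $1-ab=(1-a)+a(1-b)$. Applying $(\alpha+\beta)^*(\alpha+\beta)\le2\alpha^*\alpha+2\beta^*\beta$ with $\alpha=1-a$ and $\beta=a(1-b)$, and using the cancellation $\beta^*\beta=(1-b)^*a^*a(1-b)=(1-b)^*(1-b)$, yields the estimate
\[
(1-ab)^*(1-ab)\le 2(1-a)^*(1-a)+2(1-b)^*(1-b),
\]
valid because $\alpha,\beta,\alpha-\beta\in I[\Gamma]$, so the discarded square $(\alpha-\beta)^*(\alpha-\beta)$ lies in $\Sigma^2I[\Gamma]$. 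Iterating this bound, $(1-x)^*(1-x)$ is dominated by a fixed linear combination of the $(1-s_i)^*(1-s_i)$, each of which is dominated by a multiple of $\Delta_\mu$ by the base case; adding the estimates produces $R_x$, which finishes the proof.

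I do not anticipate a real obstacle: the argument rests only on the reduction to the spanning squares $(1-x)^*(1-x)$ of $I[\Gamma]^{\her}$ together with the telescoping inequality above, whose clean form uses nothing more than the cancellation $a^*a=1$ in $\IR[\Gamma]$.
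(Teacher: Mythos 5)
Your proof is correct and follows essentially the same route as the paper: reduce to the spanning squares $(1-x)^*(1-x)$ of $I[\Gamma]^{\her}$, handle the support of $\mu$ directly, and propagate to all of $\Gamma$ via the parallelogram-type inequality $(1-ab)^*(1-ab)\le 2(1-a)^*(1-a)+2(1-b)^*(1-b)$. The only cosmetic difference is that the paper phrases the last step as ``the set of dominated $x$ forms a subgroup'' rather than as an explicit induction on word length.
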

\begin{proof}
For $\xi,\eta\in I[\Gamma]^{\her}$, we write $\xi\le\eta$ if $\eta-\xi\in\Sigma^2I[\Gamma]$.
Since $I[\Gamma]^{\her}$ is spanned by $2-x-x^{-1}=(1-x)^*(1-x)$, $x\in\Gamma$, it suffices to show
\[
\{ x\in\Gamma : \exists R>0\mbox{ such that }(1-x)^*(1-x)\le R\Delta_\mu\}=\Gamma.
\]
It is clear that the left hand side contains the support of $\mu$. Also since
\begin{align*}
(1-xy)^*(1-xy) &= (1-x+x(1-y))^*(1-x+x(1-y))\\
&\le 2(1-x)^*(1-x)+2(1-y)^*(1-y),
\end{align*}
it forms a subgroup and hence is equal to $\Gamma$.
\end{proof}

In general for a subspace $W\subset V$, the inclusion
$\overline{W\cap V^+}\subset W\cap \overline{V^+}$ can be strict.
Here we prove that the equality holds for the case $I[\Gamma]\subset\IR[\Gamma]$.
\begin{lem}\label{lem:cl}
For every positive linear functional $\phi\colon I[\Gamma]\to\IR$, there is a sequence
of positive linear functionals $\phi_n\colon \IR[\Gamma]\to\IR$ such that $\phi_n\to\phi$
pointwise on $I[\Gamma]$. In particular, one has
$\overline{\Sigma^2I[\Gamma]}=I[\Gamma]\cap\overline{\Sigma^2\IR[\Gamma]}$ in $\IR[\Gamma]$.
\end{lem}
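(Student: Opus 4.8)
The plan is to derive the displayed identity from the pointwise-approximation statement, using the Hahn--Banach characterizations from Section~2. Since $\Delta_\mu$ is an order unit for $I[\Gamma]$ (Lemma~\ref{lem:ou}) and $1$ is an order unit for $\IR[\Gamma]$, an element $\xi\in I[\Gamma]^{\her}$ lies in $\overline{\Sigma^2I[\Gamma]}$ exactly when $\phi(\xi)\ge 0$ for every positive linear functional $\phi$ on $I[\Gamma]$, and an element $\xi\in\IR[\Gamma]^{\her}$ lies in $\overline{\Sigma^2\IR[\Gamma]}$ exactly when it is nonnegative against every positive linear functional on $\IR[\Gamma]$. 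One inclusion needs no approximation: the restriction to $I[\Gamma]$ of a positive linear functional on $\IR[\Gamma]$ is again positive, so $\overline{\Sigma^2I[\Gamma]}\subset I[\Gamma]\cap\overline{\Sigma^2\IR[\Gamma]}$. For the reverse inclusion, let $\xi\in I[\Gamma]\cap\overline{\Sigma^2\IR[\Gamma]}$ and let $\phi$ be a positive linear functional on $I[\Gamma]$; if $\phi_n$ are positive linear functionals on $\IR[\Gamma]$ with $\phi_n\to\phi$ pointwise on $I[\Gamma]$, then $\phi_n(\xi)\ge 0$ for every $n$, hence $\phi(\xi)=\lim_n\phi_n(\xi)\ge 0$, and since $\phi$ was arbitrary, $\xi\in\overline{\Sigma^2I[\Gamma]}$.

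It remains to produce such $\phi_n$. Write a given positive linear functional $\phi$ on $I[\Gamma]$ as $\phi(\xi)=\sum_x\xi(x)\psi(x)$, where $\psi\colon\Gamma\to\IR$ is determined up to an additive constant (because $\sum_x\xi(x)=0$ on $I[\Gamma]$); normalize $\psi(1)=0$. From $\phi(\eta^*\eta)=\sum_{x,y}c_xc_y\,\psi(x^{-1}y)$ for $\eta=\sum_x c_x x\in I[\Gamma]$, hermiticity and positivity of $\phi$ amount to $\psi$ being symmetric with $-\psi$ conditionally negative definite. By Schoenberg's theorem the function $x\mapsto e^{t\psi(x)}$ is then positive definite on $\Gamma$ for every $t>0$; concretely, the GNS construction of $\phi$ on the left ideal $I[\Gamma]$---valid because $x^*x=1$---yields a $1$-cocycle $b\colon\Gamma\to H$, $b(x)=\widehat{x-1}$, with $\|b(x)\|^2=\phi((1-x)^*(1-x))=-2\psi(x)$, and $e^{t\psi}$ arises from the positive definite Gaussian kernel $(u,v)\mapsto e^{-t\|u-v\|^2}$ on $H$. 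Hence $\phi_n\colon\IR[\Gamma]\to\IR$ defined by $\phi_n(\eta)=n\sum_x\eta(x)e^{\psi(x)/n}$ is a positive linear functional on $\IR[\Gamma]$, and for $\xi\in I[\Gamma]$---finitely supported with $\sum_x\xi(x)=0$---one has $\phi_n(\xi)=n\sum_x\xi(x)\bigl(e^{\psi(x)/n}-1\bigr)=\sum_x\xi(x)\psi(x)+O(1/n)\to\phi(\xi)$, so $\phi_n\to\phi$ pointwise on $I[\Gamma]$.

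The step I expect to be the real obstacle is the construction of the $\phi_n$: one must step out of the purely algebraic, order-unit framework and use the analytic fact that a positive linear functional on $I[\Gamma]$ is the same data as a conditionally positive definite function on $\Gamma$, which can be exponentiated---via Schoenberg, i.e.\ via the Gaussian on the Hilbert space of the associated affine isometric action---into a genuine positive definite function, hence into a positive linear functional on all of $\IR[\Gamma]$; rescaling by $n$ then recovers $\phi$ in the limit. Everything around this is routine bookkeeping with the characterizations of Section~2.
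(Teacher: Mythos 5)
Your proof is correct and follows essentially the same route as the paper: you identify a positive linear functional on $I[\Gamma]$ with a (conditionally negative type) function on $\Gamma$, exponentiate via Schoenberg's theorem to get positive type functions, and recover $\phi$ as the limit of the rescaled functionals $n\phi_{1/n}$; the deduction of the cone equality via the order-unit/Hahn--Banach duality is exactly what the paper leaves implicit in ``the second assertion follows from this.'' The only cosmetic difference is your sign convention for $\psi$ and the added GNS/cocycle remark, neither of which changes the argument.
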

\begin{proof}
We use the theory of conditionally negative type functions, for which we refer the reader to
Appendix C in \cite{bhv} or Appendix D in \cite{bo}.
Let a positive linear functional $\phi\colon I[\Gamma]\to\IR$ be given.
We claim that $\psi(x)=\phi(1-x)$ defines a conditionally negative type function on $\Gamma$.
Indeed, for every $\xi\in I[\Gamma]$, one has
\[
\sum_{x,y\in\Gamma}\psi(y^{-1}x)\xi(y)\xi(x)=-\phi(\xi^*\xi)\le0.
\]
(In case $\Gamma$ has property (T), the conditionally negative type function
$\psi$ is bounded and so it is of the form $\|v-\pi(x)v\|$ for some orthogonal
representation $\pi$ and a vector $v$, which implies that $\phi$ extends to
a positive linear functional on $\IR[\Gamma]$.)
It follows from Schoenberg's theorem that the functions
$\phi_t(x):=\exp(-t\psi(x))$ are of positive type for all $t\in\IR_{\geq0}$, i.e.,
they extend to positive linear functionals on $\IR[\Gamma]$.
Since
\[
\lim_{t\to0} t^{-1}\phi_t(1-x) = \lim_{t\to0}\frac{1-\exp(-t\phi(1-x))}{t}=\phi(1-x)
\]
for every $x\in\Gamma$, one has $t^{-1}\phi_t\to\phi$ on $I[\Gamma]$.
The second assertion follows from this.
\end{proof}

Combining the above lemmas with Theorem~\ref{thm:pst}, we arrive at the following.
\begin{prop}
For every $\xi\in I[\Gamma]^{\her}$, the following are equivalent.
\begin{enumerate}[$(1)$]
\item
$\pi(\xi)\geq 0$ for every orthogonal $\Gamma$-representation $\pi$.
\item
$\xi\in\overline{\Sigma^2I[\Gamma]}$, i.e., $\xi+\epsilon\Delta_\mu\in\Sigma^2I[\Gamma]$ for every $\epsilon>0$.
\end{enumerate}
\end{prop}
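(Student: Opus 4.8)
The plan is to deduce the Proposition by threading $\xi$ through Theorem~\ref{thm:pst} and the two lemmas just proved; no fresh analytic content is required, since the only genuinely nontrivial step (Schoenberg's theorem) has already been spent inside Lemma~\ref{lem:cl}. Concretely, I want to exhibit a single chain of equivalences passing through the common object $\overline{\Sigma^2 I[\Gamma]}$, so that the two stated conditions become visibly the same.

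Here is the chain. For $\xi\in I[\Gamma]^{\her}$, Theorem~\ref{thm:pst} applied in $\IR[\Gamma]$ says that $(1)$ holds if and only if $\xi\in\overline{\Sigma^2\IR[\Gamma]}$; since $\xi$ lies in $I[\Gamma]^{\her}$ by assumption, this is the same as $\xi\in I[\Gamma]\cap\overline{\Sigma^2\IR[\Gamma]}$. By the second assertion of Lemma~\ref{lem:cl}, $I[\Gamma]\cap\overline{\Sigma^2\IR[\Gamma]}=\overline{\Sigma^2 I[\Gamma]}$, so $(1)$ is equivalent to $\xi\in\overline{\Sigma^2 I[\Gamma]}$. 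Finally, Lemma~\ref{lem:ou}---which says $\Delta_\mu$ is an order unit for $I[\Gamma]$, so that the Hahn--Banach description of the closure of a positive cone from the Preliminary section applies---rewrites $\overline{\Sigma^2 I[\Gamma]}$ as $\{\,\eta\in I[\Gamma]^{\her}:\eta+\epsilon\Delta_\mu\in\Sigma^2 I[\Gamma]\text{ for every }\epsilon>0\,\}$. Putting the three identifications end to end, $(1)\iff\xi\in\overline{\Sigma^2 I[\Gamma]}\iff(2)$, which is exactly the assertion. (If one prefers to see $(2)\Rightarrow(1)$ by hand rather than via the cone machinery: $\pi(\xi)+\epsilon\,\pi(\Delta_\mu)\geq0$ for all $\epsilon>0$ and $\pi(\Delta_\mu)$ is a fixed bounded operator, so $\pi(\xi)\geq0$.)

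I do not expect any real obstacle here; the statement is essentially bookkeeping. The one point that deserves a moment's care is that all the closures in sight are unambiguous: the algebraic topology makes every linear subspace closed, so $I[\Gamma]$ is closed in $\IR[\Gamma]$ and the closure of a subset of $I[\Gamma]$ is the same whether formed inside $I[\Gamma]$ or inside $\IR[\Gamma]$---this is what lets the equality in Lemma~\ref{lem:cl} and the equality in Lemma~\ref{lem:ou} refer to the same set $\overline{\Sigma^2 I[\Gamma]}$. The genuine difficulty of the paper is not in this Proposition but in what must come after it: upgrading the infinite family of approximate relations $\Delta_\mu^2-(\kappa-\epsilon)\Delta_\mu\in\Sigma^2 I[\Gamma]$ for all $\epsilon>0$ to the single exact identity $\Delta_\mu^2-\kappa\Delta_\mu\in\Sigma^2 I[\Gamma]$ when $\Gamma$ has property (T), which will require exploiting the strict spectral gap (the open interval $(\kappa,+\infty)$) together with a finite-dimensionality or cone-closedness argument in the order-unit space $(I[\Gamma],\Delta_\mu)$.
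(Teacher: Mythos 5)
Your proof is correct and follows exactly the route the paper intends: the paper offers no separate argument beyond the phrase ``Combining the above lemmas with Theorem~\ref{thm:pst}'', and your chain $(1)\iff\xi\in\overline{\Sigma^2\IR[\Gamma]}\iff\xi\in I[\Gamma]\cap\overline{\Sigma^2\IR[\Gamma]}=\overline{\Sigma^2I[\Gamma]}\iff(2)$, using Lemma~\ref{lem:cl} for the middle equality and Lemma~\ref{lem:ou} for the last description, is precisely that combination. Your side remark about the closures being unambiguous in the algebraic topology is a sensible (if not strictly necessary) clarification.
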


\begin{proof}[Proof of Main Theorem]
As it is noted in the introduction, the group $\Gamma$ has property (T) if and only if
there is $\kappa'>0$ such that $\Delta_\mu^2-\kappa'\Delta_\mu\in\overline{\Sigma^2\IR[\Gamma]}$.
Thus, the `if' part of the assertion follows. For the other direction,
let us assume that $\Gamma$ has property (T).
Then, Lemmas~\ref{lem:cl} and \ref{lem:ou} imply that
$\Delta_\mu^2-(\kappa'-\epsilon)\Delta_\mu\in\Sigma^2I[\Gamma]$
for every $\epsilon>0$. Taking $\epsilon<\kappa'$ and letting $\kappa=\kappa'-\epsilon$, we are done.
Indeed, this shows that $\Delta_\mu^2-\kappa\Delta_\mu$ is an interior point
of $\Sigma^2I[\Gamma]$ (inside $I[\Gamma]^{\her}$).
Hence, it also belongs to the sub-cone
\[
\{ \sum_{i=1}^n r_i\xi_i^*\xi_i : n\in\IN,\,r_i\in\IR^+,\,\xi_1,\ldots,\xi_n\in\IQ[\Gamma]\cap I[\Gamma]\}
\]
which is dense and has non-empty interior (see the proof of Lemma~\ref{lem:ou}).
Now, if $\mu$ and $\kappa$ are rational, then for a given finite sequence
$\xi_1,\ldots,\xi_n\in\IQ[\Gamma]$, the linear equation
$\Delta_\mu^2-\kappa\Delta_\mu=\sum_{i=1}^n r_i\xi_i^*\xi_i$
in $(r_i)_{i=1}^n$ has a positive real solution if and only if it has
a positive rational solution.
\end{proof}

Examples for which an explicit formula
$\Delta_\mu^2-\kappa\Delta_\mu=\sum_i\xi_i^*\xi_i$ is known
are provided by \cite{bs,zuk}
(see also Theorem 5.6.1 in \cite{bhv} and Theorem 12.1.15 in \cite{bo}).

\begin{exam}\label{exam}
Let $\Gamma$ be a group generated by a finite symmetric set $S$ such that $1\notin S$.
The link is the graph with the vertex set $S$ and the edge set
$E:=\{(x,y) : x^{-1}y\in S\}$. We put on $S$ the probability measure
$\mu(x)=|\{ y\in S :(x,y)\in E\}|/|E|$, and on $E$ the uniform probability measure.
Define $d\colon L^2(S,\mu)\to L^2(E)$ by $(d\xi)(x,y)=\xi(y)-\xi(x)$, and
let $\Lambda=d^*d/2$ be the Laplacian operator on $L^2(S,\mu)$.
If the link graph is connected and the spectrum of $\Lambda$
is contained in $\{0\}\cup[\lambda,+\infty)$, then there are $\xi_i$, $i\in S$, such that
$\Delta_\mu-(2-\lambda^{-1})\Delta_\mu = \sum_i\xi_i^*\xi_i$.
Hence, $\Gamma$ has property (T) if $\lambda>1/2$.
Indeed, for the orthogonal projection $P$ from $L^2(S,\mu)$ onto the
one dimensional subspace of constant functions,
one has $\lambda^{-1}\Lambda+P-I\geq0$
and so there is an operator $T$ on $L^2(S,\mu)$ such that $\lambda^{-1}\Lambda+P-I=T^*T$.
Thus, by writing $A_{x,y}=\ip{A\delta_y,\delta_x}$ for each operator $A$ on $L^2(S,\mu)$, one has
\[
\sum_{x,y\in S^2}(\lambda^{-1}\Lambda_{x,y}+P_{x,y}-I_{x,y}) x^{-1}y
= \sum_{x,y\in S^2}\sum_{i\in S}\mu(i)^{-1}T_{i,x}T_{i,y} x^{-1}y
= \sum_{i\in S}\xi_i^*\xi_i,
\]
where $\xi_i=\mu(i)^{-1/2}\sum_x T_{i,x}x\in\IR[\Gamma]$. But since $\Lambda_{x,x}=\mu(x)$,
$\Lambda_{x,y}=|E|^{-1}$ for $(x,y)\in E$, and $\Lambda_{x,y}=0$ otherwise;
$P_{x,y}=\mu(x)\mu(y)$; and $I_{x,y}=\delta_{x,y}\mu(x)$; the left hand side is
\begin{align*}
\lambda^{-1} & \bigl(1 - \frac{1}{|E|}\sum_{(x,y)\in E}x^{-1}y\bigr)
  + \bigl(\sum_{x,y\in S}\mu(x)\mu(y)x^{-1}y\bigr)-1\\
&= \lambda^{-1}\bigl(1-\sum_x \mu(x)x\bigr)+\bigl(\sum_x \mu(x)x\bigr)^2-1\\
&= \Delta_\mu^2-(2-\lambda^{-1})\Delta_\mu.
\end{align*}
\end{exam}

\newcommand{\aogonek}{\ooalign{a\crcr\hss\raisebox{-1.5ex}[0pt][2pt]{`}}}

\end{document}